\documentclass[a4paper,10pt]{article}

\usepackage{geometry}
\usepackage{subcaption}
\usepackage{verbatim}
\usepackage{caption}
\usepackage{url}
\usepackage{amsmath}
\usepackage{geometry}
\usepackage{amssymb}
\usepackage{amsmath}
\usepackage{graphicx}
\usepackage{amsthm}
\usepackage{bbm}
\usepackage{float}
\usepackage{color,soul}
\usepackage[hidelinks]{hyperref}
\usepackage[T1]{fontenc}
\usepackage[utf8]{inputenc}
\usepackage{authblk}
\usepackage{booktabs}
\usepackage{longtable}
\usepackage{enumerate}
\usepackage{tikz}
\usetikzlibrary{arrows}
\usetikzlibrary{decorations.markings}
\usepackage{indentfirst}
\newcommand{\eChar}{\begin{enumerate}[(i)]}
\newcommand{\eCharR}{\begin{enumerate}[(a)]}
\newcommand{\eBr}{\begin{enumerate}[(1)]}

\newcommand{\eps}{\varepsilon}

\title{On a maximal anti-Ramsey conjecture of Burr, Erd\H{o}s, Graham, and S\'os}

\author{ }

\date{ }

\theoremstyle{plain}
\newtheorem{lemma}{Lemma}[section]
\newtheorem{theorem}[lemma]{Theorem}

\theoremstyle{definition}

\newtheorem{claim}{Claim}

\newtheorem{conjecture}[lemma]{Conjecture}

\newtheorem*{defn*}{Definition}

\newcommand{\di}{\operatorname{dist}}

\newcommand{\ex}{\operatorname{ex}}

\begin{document}

\author{
Matija Buci\'c\thanks{Institute of Mathematics, University of Vienna, Vienna, Austria, and Department of Mathematics, Princeton University, Princeton, America. Research supported by National Science Foundation grant DMS-2349013. Email: {\tt mb5225@princeton.edu}.}
~~~~
Kaizhe Chen\thanks{School of the Gifted Young, University of Science and Technology of China, Hefei, Anhui 230026, China. Research supported by National Natural Science Foundation of China grant 125B1009. Email: {\tt ckz22000259@mail.ustc.edu.cn}.}
~~~~
Jie Ma\thanks{School of Mathematical Sciences, University of Science and Technology of China, Hefei, Anhui 230026, and Yau Mathematical Sciences Center, Tsinghua University, Beijing 100084, China.
Research supported by National Key Research and Development Program of China 2023YFA1010201 and National Natural Science Foundation of China grant 12125106. Email: {\tt jiema@ustc.edu.cn}.}
}

\maketitle

\begin{abstract}
    Given a graph $H$, the maximal anti-Ramsey function $f(n,e,H)$ denotes the minimum integer $f$ for which there exists an $n$-vertex graph $G$ with at least $e$ edges admitting an edge-coloring with $f$ colors in which each copy of $H$ in $G$ is rainbow.
    In the late 1980s, Burr, Erd\H{o}s, Graham, and S\'os conjectured that for every odd cycle $C_{2k+1}$ with $k \ge 3$,
    $f(n, \lfloor n^2/4 \rfloor + 1, C_{2k+1}) = n^2/8 + o(n^2)$.
    In this note, we confirm this conjecture for all $k \ge 4$. More generally, we establish the asymptotic formula $$f\left(n,e,C_{2k+1}\right)=\frac{e}{2}+\frac{n}{2}\sqrt{e-\frac{n^2}{4}}+o(n^2),$$
    for the entire non-trivial range of $\left\lfloor n^2/4 \right\rfloor+1\le e\le \binom{n}{2}$.
\end{abstract}

\section{Introduction}
\noindent Ramsey theory refers to a large body of results, all roughly speaking, saying that any sufficiently large structure, no matter how chaotic, needs to have a well-organised substructure. In the original graph theoretic instance, which underlies many surprising applications beyond, the large structures being considered are graphs, and one encodes the structure by coloring the edges. In this setting, a well-organised substructure is usually taken to mean a monochromatic one. A first result of this flavour was proved by Ramsey in 1929 \cite{ramsey}, and Ramsey theory has since grown into a fully fledged, extremely active area of research.

In the 1970s, Erd\H{o}s, Simonovits, and S\'os \cite{ESS} initiated the study of so-called anti-Ramsey theory in which one, instead of a well-organised substructure, seeks a very chaotic one. Here, in the original graph theoretic instance, a chaotic structure is captured by the notion of rainbow coloring. Namely, in an edge-coloring of a graph $G$, a subgraph $H$ of $G$ is said to be {\it rainbow} if all its edges have distinct colors. A typical question in anti-Ramsey theory asks, for a fixed graph \(H\) and a graph \(G\) (such as the clique \(K_n\)), to find the minimum \(r\) such that any \(r\)-edge-coloring of \(G\) contains a rainbow copy of \(H\) (see, e.g., \cite{A,AJ,ESS,JW}).

In the 1980s, Burr, Erd\H{o}s, Graham, and S\'os \cite{BEGS} 
initiated the study of a certain dual version of this classical anti-Ramsey problem. 
Namely, they ask, for a fixed graph $H$ and a graph $G$, to find the minimum $r$ such that there exists an $r$-edge-coloring of $G$ in which \emph{every} copy of $H$ is rainbow.
Namely, when is it possible to have a completely locally chaotic coloring? 
The contrapositive here frames this as asking in some sense the weakest possible Ramsey question, where we are seeking to guarantee at least one non-rainbow copy of a given target graph.

The following maximal anti-Ramsey function, 
was introduced in \cite{BEFGS,BEGS}, reiterated many times, including in the famous book Erd\H{o}s on Graphs \cite{erd-graphs} by Chung and Graham, and further studied in \cite{KS,sarkozy-selkow,S,SS}, to name just a few examples.

\begin{defn*}
    Given a graph $H$ and $n,e>0,$ we denote by $f(n,e,H)$ the minimum integer $f$ for which there exists an $n$-vertex graph $G$ with at least $e$ edges admitting an edge-coloring with $f$ colors in which each copy of $H$ in $G$ is rainbow.

\noindent In other words, what is the largest $f$ such that any $f$-edge-colored graph on $n$ vertices with $e$ edges contains a non-rainbow copy of $H$?
\end{defn*}

\noindent The original motivation for Burr, Erd\H{o}s, Graham, and S\'os came from a question of Berkowitz in computer science, related to time/space tradeoffs for Turing machines (see \cite{BEFGS,BEGS}). Various instances of the question have close ties to the infamous problem from additive combinatorics of determining the largest size of a subset of $\{1,2,\ldots, n\}$ without a $k$-term arithmetic progression and the infamous Brown-Erd\H{o}s-S\'os Conjecture (see \cite{BEGS} for more details).

The maximal anti-Ramsey function $f(n,e,H)$ has an immediate connection to extremal graph theory. Here, one of the central objects of study is the Tur\'an number of a graph. The $n$-vertex \emph{Tur\'an number} of $H$, denoted $\ex(n,H)$, is defined as the maximum number of edges in an $n$-vertex graph that contains no copy of $H$.
Assuming $H$ has at least two edges, it is immediate that $f(n,e,H)=1$ if and only if $e\le \ex(n,H)$. This implies that the question of determining the behaviour of the maximal anti-Ramsey function $f(n,e,H)$ is only interesting if $e> \ex(n,H)$. Perhaps the most immediate ``stability'' question is what happens if $e=\ex(n,H)+1$. This already turns out to be very interesting and was first explored by Burr, Erd\H{o}s, Graham, and S\'os \cite{BEGS} in 1980s, in perhaps the simplest interesting\footnote{The arguably more natural case of cliques is rather simple and understood very well already in \cite{BEGS}.} instance, namely that of $H$ being an odd cycle $C_{2k+1}$. 
We note that as a particularly simple instance of the Erd\H{o}s-Stone-Simonovits Theorem, we have $\ex(n,C_{2k+1})=\left\lfloor n^2/4 \right\rfloor$ for all $k\ge 1$ when $n$ is large enough.
So, the question raised by Burr, Erd\H{o}s, Graham, and S\'os \cite{BEGS} asks for the behaviour of $f\left(n,\left\lfloor n^2/4 \right\rfloor +1,C_{2k+1}\right)$.

A striking trichotomy is observed in the behavior of $f\left(n,\left\lfloor n^2/4 \right\rfloor +1,C_{2k+1}\right)$: it is constant for $C_3$, linear in $n$ for $C_5$, and quadratic in $n$ for all longer odd cycles.
It is easy to see that $f\left(n,\left\lfloor n^2/4 \right\rfloor +1,C_3\right)=3$.
For $C_5$, Erd\H{o}s and Simonovits (see \cite{BEGS}) proved $f\left(n,\left\lfloor n^2/4 \right\rfloor +1,C_5\right)=\left\lfloor n/2 \right\rfloor+3$ when $n$ is large enough.
For odd cycles $C_{2k+1}$ of length at least $7$, Burr, Erd\H{o}s, Graham, and S\'os \cite{BEGS} showed that $f\left(n,\left\lfloor n^2/4 \right\rfloor +1,C_{2k+1}\right)\ge cn^2$ for some constant $c>0$. The natural next question is to determine the correct asymptotics, namely, to determine the correct leading constant. This led Burr, Erd\H{o}s, Graham, and S\'os \cite{BEGS} to pose the following conjecture (also reiterated in an open problem collection by Erd\H{o}s \cite{E}; see Problem \#809 on Erdős Problems website~\cite{B809}).

\begin{conjecture}[\cite{BEGS,E}]\label{conj:main2}
    Let $k\ge 3$ be an integer. Then, 
    $$f\left(n,\left\lfloor \frac{n^2}{4} \right\rfloor+1,C_{2k+1}\right)=\frac{n^2}{8}+o(n^2).$$
\end{conjecture}

\noindent We prove this conjecture for all $k\ge 4$. In fact, we prove the following significantly more general result, which determines the asymptotic value of $f\left(n,e,C_{2k+1} \right)$ for the entire range of $e$.

\begin{theorem}\label{main}
    Let $k\ge 4$ be an integer. For $\left\lfloor n^2/4 \right\rfloor+1\le e\le \binom{n}{2}$, we have
    $$f\left(n,e,C_{2k+1}\right)=\frac{e}{2}+\frac{n}{2}\sqrt{e-\frac{n^2}{4}}+o(n^2).$$
\end{theorem}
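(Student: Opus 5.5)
The plan has two halves: a construction giving the upper bound on $f$, and a structural argument showing that every coloring in which each $C_{2k+1}$ is rainbow uses at least the stated number of colors. Write $e=n^2/4+s^2$ with $0\le s\le n/2$. A short computation shows the target value can be rewritten as
$$\frac e2+\frac{ns}{2}=e-\frac12\Bigl(\frac n2-s\Bigr)^2 .$$
This suggests the following guiding principle, which I expect to be the shape of both bounds. Apart from $o(n^2)$ edges, every colour class should have at most two edges. Moreover, the edges lying in a colour class of size two should form (up to $o(n^2)$ edges) a subgraph of a complete bipartite graph with at most $(n/2-s)^2$ edges. The number of colours is then at least $e-\frac12(n/2-s)^2-o(n^2)$.

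\emph{Upper bound (construction).} I would take a complete bipartite graph $K_{m,m}$ with $m=n/2-s$ on $A\cup B$, and a set $S$ of the remaining $2s$ vertices. I would join $S$ to everything and add some edges inside $S$ (or inside $A\cup S$), so that the total edge count is $e$. I have not checked the count: joining $S$ to everything gives $\approx n^2/4+ns-s^2$ edges, which differs from $e$ when $0<s<n/2$. So the exact graph will need adjustment, keeping $(n/2-s)^2$ pairable bipartite edges while reaching $e$ edges. Every edge touching $S$ receives its own colour. The edges of $K_{m,m}$ are split into pairs, each pair getting one colour, following a Burr--Erd\H{o}s--Graham--S\'os type pairing. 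The pairing is designed so that the two edges of a pair never lie on a common $C_{2k+1}$. Since every odd cycle must leave the bipartite part through $S$, it suffices to check that the paired edges cannot be linked by paths of the right parities and lengths through $S$. This is where $k\ge 4$ should give enough slack to choose the pairing, for instance pairing edges in a way encoded by a suitable auxiliary structure on $A\times B$.

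\emph{Lower bound.} Fix an $n$-vertex graph $G$ with $e$ edges and a colouring in which every $C_{2k+1}$ is rainbow. I would apply the regularity lemma to $G$ and pass to the reduced graph $R$. Since $e>n^2/4+\Omega(n^2)$, or by a stability argument when $s=o(n)$, the graph $R$ is far from bipartite. I would then prove the key lemma: two edges of the same colour essentially cannot both lie in regular pairs that can be joined through a non-bipartite dense region. Two such edges would close, via long paths in the dense regular structure (available because $2k+1\ge 9$), into a $C_{2k+1}$ containing both, contradicting rainbowness. The same argument should show that colour classes of size at least three contribute only $o(n^2)$ edges.

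The key lemma should force the "repeatable" edges into a bipartite subgraph $G[X,Y]$. Every vertex outside $X\cup Y$, and every edge inside $X$ or $Y$, creates odd structure near it. Counting then gives $e\le |X||Y|+(\text{edges meeting the rest})$. Maximising $|X||Y|$ subject to this constraint yields $|X||Y|\le (n/2-s)^2+o(n^2)$, which gives the bound.

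The main obstacle is the key lemma together with the optimisation: showing that repeated colours must concentrate on a bipartite piece whose size is governed by $e$ through $(n/2-s)^2$. The embedding of $C_{2k+1}$ through two prescribed edges is where $k\ge 4$ (rather than $k=3$) should be needed. I would first attempt it with a blow-up and path-embedding argument in $R$, treating separately the case where one of the two edges is incident to a low-degree vertex.
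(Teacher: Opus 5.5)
Both halves of your proposal have a genuine gap, and both come from the same incorrect structural picture.

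\textbf{Upper bound.} Your construction cannot admit any useful pairing. As soon as one vertex $s\in S$ is adjacent to both sides $A,B$ of the $K_{m,m}$ (which holds when $S$ is joined to everything), any two disjoint edges $a_1b_1,a_2b_2$ of $K_{m,m}$ lie on the cycle $a_1b_1a_2b_2\,s\,a_3b_3a_4b_4\cdots a_kb_k\,a_1$ of length $2k+1$. The same happens for two edges sharing a vertex. So every edge of $K_{m,m}$ needs its own colour, and the construction saves nothing. The construction that works (the paper's) is two vertex-disjoint cliques on about $n/2+s$ and $n/2-s$ vertices. They have $\binom n2-(n/2+s)(n/2-s)\approx e$ edges, and no cycle meets both cliques. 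So the small clique can reuse the colours of the large one, giving $\binom{n/2+s}{2}\approx e/2+ns/2$ colours, and $k$ plays no role here. In this extremal colouring, the edges in colour classes of size two include every edge of a clique on $\approx n/2-s$ vertices. Hence your guiding principle, that these edges form a bipartite graph up to $o(n^2)$ edges, is false whenever $n/2-s=\Omega(n)$.

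\textbf{Lower bound.} Your plan therefore targets a false statement: no key lemma can force the repeated-colour edges into a bipartite $G[X,Y]$, because the two-clique colouring is a counterexample. Even granting it, the count you sketch, $e\le|X||Y|+(\text{edges meeting the rest})$ with $|X|+|Y|=t$, only gives $|X||Y|\le t^2/4\lesssim n^2/4-s^2$. That yields $n^2/8+3s^2/2$ colours, short of the target by $s(n/2-s)$.

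The missing idea is that the right structure is a large \emph{well-connected} vertex set on which all incident edges are rainbow. The paper proceeds as follows.
\begin{enumerate}[(1)]
\item It finds a set $A$ with $|A|\ge n/2+\sqrt{e-n^2/4}$ and pairwise distances at most $3$ (Lemma~\ref{3.1}).
\item It shows any two edges meeting $A$ lie on a common $C_{2k+1}$. Join them by a path of length at most $3$, extend greedily, and close with a path of length exactly $4$ avoiding the $2k-4$ used vertices (Lemma~\ref{del}). This is where $k\ge 4$ is needed.
\item Hence the number of colours is at least the number of edges meeting $A$, which is at least $e-\binom{n-|A|}{2}\ge e-\tfrac12(n/2-s)^2$.
\end{enumerate}

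Low-degree vertices are handled not by a side case but by induction on $n$. Deleting a minimum-degree vertex either already gives the bound or forces $\delta$ large enough for Lemma~\ref{del}.

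When $e-n^2/4$ is small (the regime of the original conjecture), the length-$4$ path lemma cannot be applied robustly, and a separate argument is needed. The paper takes an edge $pq$ with more than $n/6$ common neighbours (Lemma~\ref{book}), uses a triangle $pqr$ to adjust the cycle length by one, and shows all edges meeting $N(p)$ are rainbow. Your ``stability argument'' placeholder would have to supply something of this kind.
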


\medskip

\noindent\textbf{Notation.} Throughout the paper, we use the following notation. Let $G=(V,E)$ be a graph. For a vertex $x \in V$, we denote by $N(x)$ the set of neighbors of $x$, and write $d(x):=|N(x)|$ for the degree of $x$. 
For a vertex set $A\subset V$, we denote by $G[A]$ the subgraph of $G$ induced by $A$. 
For a path $P$, the length $\ell (P)$ of $P$ is the number of edges in $P$. For any two vertices $x,y \in V$, we denote by $\di(x,y)$ the distance between $x$ and $y$ in~$G$.

\medskip

\noindent\textbf{Organization.}  
The remainder of the paper is organized as follows.  
In Section~\ref{Sec:sketch}, we provide a proof sketch of our main result, Theorem~\ref{main}.  
In Section~\ref{3}, we present several preliminary lemmas on the existence of short paths between pairs of vertices in dense graphs.  
In Section~\ref{4}, we give a complete proof of Theorem~\ref{main}.  
 
\section{Proof sketch}\label{Sec:sketch}
\noindent In this section, we outline the main ideas of our proof.
The upper bound on $f(n,e,C_{2k+1})$ follows by taking two vertex disjoint cliques of appropriate sizes, and coloring all edges of the same clique using different colors (reusing colors between the two cliques). This example was already observed in \cite{BEGS} and motivated Conjecture~\ref{conj:main2}. 

Our main contribution is the proof of the matching lower bound.
We will show that for any constant $\varepsilon>0$, 
    \begin{align}\label{sketch}
        f(n,e,C_{2k+1})\ge \frac{e}{2}+\frac{n}{2}\sqrt{e-\frac{n^2}{4}}-\varepsilon n^2,
    \end{align}
holds for any $\frac{n^2}4 < e \le \binom{n}{2},$ with $n$ large enough.

The proof proceeds by induction on $n$, namely we will assume that inequality \eqref{sketch} holds for any graph on $n-1$ vertices. Let now $G$ be an $f(n,e,C_{2k+1})$-edge-colored graph with all $C_{2k+1}$ being rainbow.

Since the $(n-1)$-vertex graph obtained from $G$ by deleting a vertex of minimum degree $\delta$ still has all $C_{2k+1}$ being rainbow, we know that it uses at least $f(n-1,e-\delta, C_{2k+1})$ colors. So, $f(n,e, C_{2k+1}) \ge f(n-1,e-\delta, C_{2k+1})$. 
Using the inductive assumption, \eqref{sketch} provides a lower bound on $f(n-1,e-\delta, C_{2k+1})$. 
This same lower bound already suffices to show \eqref{sketch} holds for $f(n,e, C_{2k+1})$ unless the minimum degree $\delta$ is relatively large. 

Therefore, it suffices to consider the situation when $\delta$ is relatively large (see \eqref{g} for a precise description). 
The proof then splits into two cases depending on whether the number of edges is substantially larger than $n^2/4$, that is, whether 
$e=e(G) \ge \left(\frac{1}{4} + \varepsilon^6 \right)n^2.$

In the case when $e\ge \left(\frac{1}{4} + \varepsilon^6 \right)n^2$, relying in addition on the lower bound on $\delta$ mentioned above, we can apply two structural lemmas which we prove in Section~\ref{3} to conclude:
\begin{itemize}
    \item[1.] There exists a large vertex set $A$ in which any two vertices are at distance at most 3 in $G$ (Lemma~\ref{3.1}).
    \item[2.] Any two vertices in $G$ are connected by a path of length 4 (Lemma~\ref{del}).
\end{itemize} 
Using these lemmas, we show that any two edges incident to $A$ must belong to a common $C_{2k+1}$.
The idea is to connect the two edges by a short path (of length at most 3 given by the first lemma),
extend it to a path of length $2k-3$ (using a greedy argument), and finally use Lemma~\ref{del} (applied to the graph obtained by deleting the inner vertices of our path of length $2k-3$) to close it into a cycle of length $2k+1$. 
Consequently, since all $C_{2k+1}$ are rainbow, all edges incident to $A$ must receive distinct colors, and a counting argument yields the required lower bound.

In the case when $e< \left(\frac{1}{4} + \varepsilon^6 \right)n^2$, the above-mentioned lower bound on $\delta$ will be close to $n/2$ (see \eqref{del2} for the precise bound).
We first show that any two vertices of $G$ are connected by a path of length 2 or 3, and moreover, this path can be chosen to avoid any small set of vertices (see Claim~\ref{claim1}).
Using a well-known classical result (see Lemma~\ref{book}) on the size of a ``book'' that we can find in any graph with more edges than the corresponding bipartite Tur\'an graph, we can find an edge $pq$ with $|N(p)\cap N(q)|\ge n/6$. 
We then prove that any two edges incident to $N(p)$ lie on a common $C_{2k+1}$, and therefore must have different colors.
The construction of such a $C_{2k+1}$ is split into several cases depending on the interaction of the two edges between themselves and with the vertices $p,q$. The basic idea is similar to that in the above case; we use a greedy argument to find a long path and Claim~\ref{claim1} to close it into a cycle. Since Claim~\ref{claim1} provides connecting paths of length $\ell \in \{2,3\}$, an additional idea here is to embed an ``adjuster'' into our cycle, namely a triangle $pqr$ (where we can choose a suitable $r$ since $|N(p)\cap N(q)|\ge n/6$) which gives us two options of routing the cycle (via the single edge $pr$ or the path $pqr$ of length two). 
Finally, using the lower bound on $\delta$, once again a counting argument shows that the number of such edges matches the required lower bound.

\section{Preliminary short path connectivity lemmas}\label{3}
\noindent In this section, we introduce several preliminary lemmas enabling us to connect vertices in our (rather dense) graphs with short paths. 

The first of our lemmas shows that any graph with density at least roughly $1/2$ contains a large set of vertices, all of which are very close to each other. 
Finding large subgraphs with small diameter (or even decomposing a graph into a few such subgraphs) is a rather classical question, see \cite[Chapter 4.3]{bela} and \cite{fox-sudakov}, for just some examples. 

\begin{lemma}\label{3.1}
    Let $G$ be a graph with $n$ vertices and $e\ge \frac{n^2}{4}-\frac{n}{2}$ edges. Then, $G$ contains a vertex set $A$ such that any two vertices in $A$ are at distance at most 3 in $G$, and
    $$|A|\ge \frac{n}{2}+\sqrt{e-\frac{n^2}{4}+\frac{n}{2}}.$$
\end{lemma}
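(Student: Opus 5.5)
The plan is to take $A=N(x)\cup N(y)$ for a well-chosen edge $xy$. For any edge $xy$, any two vertices $a,b\in N(x)\cup N(y)$ are joined by a walk of length at most $3$: $a$ goes to $x$ or $y$, then possibly across the edge $xy$, then to $b$. So every such set has the required distance property, and it suffices to find an edge with
$$|N(x)\cup N(y)|\ge \frac n2+\sqrt{e-\frac{n^2}{4}+\frac n2}.$$
The target is the two-clique graph with cliques of sizes $a$ and $n-a$, where $\binom a2+\binom{n-a}2\approx e$ forces $a\approx n/2+\sqrt{e-n^2/4}$. There $N(x)\cup N(y)$ is exactly the larger clique, so an averaging argument over edges must be essentially tight on that example.

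I would double count pairs $(xy,z)$ with $xy\in E(G)$ and $z$ adjacent to $x$ or $y$. This gives
$$\sum_{xy\in E}|N(x)\cup N(y)|=\sum_{z}t(z),$$
where $t(z)$ is the number of edges meeting $N(z)$. Each such edge has an endpoint in $N(z)$, so the edges not counted lie inside $V\setminus N(z)$, a set of $n-d(z)$ vertices. Hence
$$t(z)\ge e-\binom{n-d(z)}{2}.$$
A second bound is $t(z)\ge \sum_{u\in N(z)}d(u)-e(G[N(z)])\ge \sum_{u\in N(z)}d(u)-\binom{d(z)}2$. One could combine the two bounds, or weight edges by how many $z$ see them, but I would first try the simplest route using only the first bound. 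Dividing by $e$ then gives an edge with
$$|N(x)\cup N(y)|\ge \frac1e\sum_z\Big(e-\binom{n-d(z)}2\Big).$$
This should be optimized against the constraint $\sum_z d(z)=2e$.

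The main obstacle is this optimization. The map $d\mapsto e-\binom{n-d}2$ is concave in $d$, so Jensen's inequality pushes the wrong way: the minimum over degree sequences is attained at unbalanced sequences, not at the regular one. I therefore expect the pure averaging bound to fall short of the square-root term; it likely yields only something like $2e/n$, which is of order $n/2+O(e/n-n/4)$ and is too weak. To get the exact constant I would refine the averaging. Instead of the uniform average over edges, average $|N(x)\cup N(y)|$ with weights, or simply take the edge maximizing it. Supposing for contradiction that every edge has union at most $m<\frac n2+\sqrt{e-n^2/4+n/2}$, every edge $xy$ then has at least $n-m$ common non-neighbours. I would then count triples $(xy,z)$ with $z\notin N(x)\cup N(y)$ from the $z$ side: each such $z$ contributes at most $\binom{n-d(z)}2$ (edges outside $N(z)$, using also $d(z)\le m$). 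Together with the constraint $\sum_z d(z)=2e$, this should give a quadratic inequality in $m$, namely
$$e(n-m)\le\sum_z\binom{n-d(z)}2,\qquad d(z)\le m.$$

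Given $d(z)\le m$ and $\sum_z d(z)=2e$, the right-hand side is maximized, by convexity of $\binom{n-d}{2}$ on $[0,m]$, at degrees split between $0$ and $m$. This is the extremal two-clique shape. Solving the resulting quadratic should give $m\ge \frac n2+\sqrt{e-\frac{n^2}4+\frac n2}$, and the lower-order term $+\frac n2$ under the root should come from the exact binomial arithmetic. The hypothesis $e\ge n^2/4-n/2$ is exactly what makes the square root real. Checking that this extremal configuration is feasible, and that the convexity argument really closes with the precise lower-order terms, is the step I expect to need the most care.
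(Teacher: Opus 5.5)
The reduction is sound: for any edge $xy$, any two vertices of $N(x)\cup N(y)$ are at distance at most $3$. So it would suffice to find an edge with $|N(x)\cup N(y)|\ge C_1:=\frac n2+\sqrt{e-\frac{n^2}{4}+\frac n2}$. This is a stronger statement than the lemma; the paper's set $A$ need not lie inside any such union. The gap is that your counting cannot produce this edge.

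Your ``refined'' count adds almost nothing. The identity $\sum_{xy\in E}\bigl(n-|N(x)\cup N(y)|\bigr)=\sum_z e(G[V\setminus N(z)])$ is the same double count as your uniform average, rearranged. The only new input is the constraint $d(z)\le m$. Carry out your own extremal analysis: about $2e/m$ vertices of degree $m$, the rest isolated. Then $e(n-m)\le \frac{2e}{m}\binom{n-m}{2}+\bigl(n-\frac{2e}{m}\bigr)\binom n2$ becomes, with $e=\alpha n^2$ and $m=\mu n$, only $\mu\ge \frac32-\frac{1}{4\alpha}$ asymptotically. At $e=0.3n^2$ this gives $m\gtrsim 0.667n$, whereas $C_1\approx 0.724n$. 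For $e=(\frac14+s)n^2$ it gives $\frac n2+\frac{4s}{1+4s}n$ instead of $\frac n2+\sqrt{s}\,n$, and $\frac{4s}{1+4s}<\sqrt s$ for all $0<s<\frac14$. So no quadratic yielding $m\ge C_1$ comes out: you lose the square root throughout the nontrivial range.

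The underlying problem is that a bound depending only on the degree sequence is too weak. The maximizer of your relaxation is not the two-clique shape, which has degrees about $C_1-1$ and $n-C_1-1$. Instead it consists of isolated vertices, each credited with $\binom n2$ edges outside its neighbourhood, which is more than $e$. The obvious fix does not save it: using $\min\{e,\binom{n-1-d(z)}{2}\}$ only pushes the bound to about $0.68n$ at $e=0.3n^2$.

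What is missing is structural information that low-degree vertices force a second, far-away part of the graph. The paper supplies this directly. If $\Delta\ge C_1-1$, it takes $\{v_0\}\cup N(v_0)$. Otherwise it takes $A=\{v:d(v)>X\}$, where $X$ is the largest degree of a vertex that has a vertex of at least equal degree at distance more than $3$. It then bounds $2e$ using such a far pair $x,y$, whose closed neighbourhoods are disjoint with no edges between them, and this forces $\Delta\ge C_1-1$, a contradiction. To salvage the edge-union route you would need an argument of comparable strength; averaging over edges, with or without the degree cap, does not suffice.
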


\begin{proof}
    For simplicity, set $C_1 :=\frac{n}{2}+\sqrt{e-\frac{n^2}{4}+\frac{n}{2}}$ and $C_2 :=n-C_1$.
    Note that both $C_i$ for $i=1,2$ are solutions to the quadratic equation 
    \begin{equation}\label{eq:quadratic}
        x^2-nx+\binom{n}{2}-e=0.
    \end{equation}
    
    Let $\Delta$ be the maximum degree of $G=G(V,E)$, and let $v_0$ be a vertex of degree $\Delta$. Note that the distance between any two vertices in $\{ v_0\}\cup N(v_0)$ is at most 2. If $\Delta\ge C_1-1$, then $\{ v_0\}\cup N(v_0)$ is a desired vertex set. Now, we assume that $\Delta< C_1-1$. Set
    $$X:= \max \{ d(x)\ |\ x\in V,\ \exists\ y\in V,\ d(x)\le d(y),\ \di(x,y)>3 \},$$
    or in words, $X$ is the maximum degree of a vertex for which there exists a vertex at distance more than $3$ to it with at least as large a degree.
    
    \noindent Note that if any two vertices in $G$ have distance at most 3, then we may set $A=V$ and the proof is done. So, we may assume two vertices at distance more than three exist and, in particular, that $X$ is well-defined. 
    
    We will use $X$ as a sort of threshold for whether a vertex is of high or low degree. With this in mind let $A:=\{v\in V\ |\ d(v)>X\}$ and $B:=V \setminus A=\{v\in V\ |\ d(v)\le X\}$. By the definition of $X$, the distance between any two vertices in $A$ is at most 3, and our goal is to show that $|A|\ge C_1$. Let us hence assume, towards a contradiction, that $|A|< C_1$, and hence $|B|=n-|A|> C_2$. 

    \begin{claim}\label{cla:one}
    $X> n-\Delta -2$.
    \end{claim}
    \begin{proof}
        Assume, towards a contradiction that $X\le n-\Delta -2$. Then,
    \begin{align*}
        2e=\sum_{v\in A}d(v)+\sum_{v\in B}d(v)\le |A|\Delta + |B|X\le |A|\Delta + |B|(n-\Delta -2)= |A|(2\Delta+2-n)  + n(n-\Delta -2).
    \end{align*}
    Since $\Delta\ge \frac{2e}{n}\ge \frac{n}{2}-1$, we have $2\Delta+2-n\ge 0$. Recall that $|A|< C_1$. We obtain
    \begin{align}\label{Delta}
        2e\le C_1(2\Delta+2-n)  + n(n-\Delta -2)= \Delta(2C_1-n) +(n-2)(n-C_1).
    \end{align}
    Recall that $\Delta< C_1-1$, so we have $C_1> \Delta+1\ge n/2$. Then, 
    \begin{align*}
        \Delta\overset{\eqref{Delta}}{\ge} \frac{2e-(n-2)(n-C_1)}{2C_1-n}\overset{\eqref{eq:quadratic}}{=}C_1-1.
    \end{align*}
    This is a contradiction.
    \end{proof}

    By the definition of $X$, there exist two vertices $x$ and $y$ with $d(y)\ge d(x)=X$ and $\di(x,y)>3$. 
    Note that $\{x\}\cup N(x)$ and $\{y\}\cup N(y)$ are two disjoint subsets and there is no edges between them.
    Set $D:=\{x,y\}\cup N(x)\cup N(y)$, $S:=V\backslash D$, and $\Delta_D:=\max\{d(v)\ |\ v\in D \}$. Then,
    \begin{align*}
        2e&=\sum_{v\in B}d(v)+\sum_{v\in S\backslash B}d(v)+\sum_{v\in D\backslash B}d(v)\\
        &\le |B|X + (|S|-|S\cap B|)\Delta + (|D|-|D\cap B|)\Delta_D\\
        &= |B|X +(|S|-|S\cap B|)\Delta + (n-|S|-|B|+|S\cap B|)\Delta_D\\
        &= |B|X+|S|\Delta+(n-|S|-|B|)\Delta_D-|S\cap B|(\Delta-\Delta_D).
    \end{align*}
    By the definition of $\Delta$ and $\Delta_D$, we have $\Delta\ge\Delta_D$. Thus
    \begin{align*}
        2e\le |B|X+|S|\Delta+(n-|S|-|B|)\Delta_D= |B|(X-\Delta_D)+|S|\Delta+(n-|S|)\Delta_D.
    \end{align*}
    Since $x\in D$, we have $\Delta_D\ge d(x)=X$. Recall that $|B|>C_2$. We derive
    \begin{align}\label{Deld}
        2e\le C_2(X-\Delta_D)+|S|\Delta+(n-|S|)\Delta_D=C_2X +|S|\Delta+ (n-|S|-C_2)\Delta_D.
    \end{align}
    
    We next show that $\Delta_D\le n-X-2$. First, $d(y)=|N(y)|= n-|S\cup N(x)\cup \{x,y\}|\le n-X-2$. Since $d(x)\le d(y)$, we also have $d(x)\le d(y)\le n-X-2$. 
    For any vertex $v\in N(y)$, since $v$ has no neighbors in $N(x)\cup \{x,v\}$, we have $d(v)\le n-X-2$. Similarly, for any vertex $v\in N(x)$, we have $d(v)\le n-d(y)-2\le n-X-2$. Therefore, we have $\Delta_D\le n-X-2$.

    Since $X> n-\Delta -2$ (by Claim~\ref{cla:one}), and $\Delta<C_1-1$, we have 
    $$n-|S|-C_2\ge |N(x)\cup \{x\}|-C_2\ge X+1-C_2> n-\Delta -1-C_2>n-C_1-C_2=0.$$
    Thus, inequality \eqref{Deld} yields
    \begin{align*}
        2e\le C_2X +|S|\Delta+ (n-|S|-C_2)(n-X-2)
        = |S|(\Delta-n+X+2)+n(n-X-2)-C_2(n-2X-2).
    \end{align*}
    Since $X> n-\Delta -2$ (by Claim~\ref{cla:one}), and $|S|=n-|N(y)|-|N(x)|-2\le n-2X-2$, we further deduce that
    \begin{align}\notag
        2e&\le (n-2X-2)(\Delta-n+X+2)+n(n-X-2)-C_2(n-2X-2)\\ \label{n2x}
        &= (\Delta-C_2)(n-2X-2)+2(X+1)(n-X-2).
    \end{align}

    We next show that $n-2X-2> 0$. Otherwise, assume $n-2X-2\le 0$. Then, $X\ge \frac{n}{2}-1$, and hence $d(y)\ge d(x)=X\ge \frac{n}{2}-1$. 
    Note that there are no edges between $N(x)\cup \{x\}$ and $N(y)\cup \{y\}$. 
    We then can deduce that
    $$e\le \binom{n}{2}-(X+1)(d(y)+1)\le \binom{n}{2}-\frac{n}{2}\cdot\frac{n}{2}=\frac{n^2}{4}-\frac{n}{2}.$$
    By our initial assumption that $e\ge \frac{n^2}{4}-\frac{n}{2}$, we have $e= \frac{n^2}{4}-\frac{n}{2}$, so $C_1=\frac{n}{2}\le X+1 =d(x)+1 \le \Delta +1 $, which is a contradiction. 
    Thus, $n-2X-2> 0$.
    
    Now, according to inequality \eqref{n2x}, we have
    \begin{align*}
        \Delta&\ge \frac{2e-2(X+1)(n-X-2)}{n-2X-2}+C_2\\
        & = \frac{2e+2X^2-2nX+4X-n+2}{n-2X-2}+C_2-1\\
        & = \frac{(C_1-1-X)^2+(C_2-1-X)^2}{C_1-1-X + C_2-1-X}+C_2-1\\ &\ge (C_1-1-X)-(C_2-1-X)+C_2-1=C_1-1,
    \end{align*}
    where in the second equality we used $2e=n^2-n-2C_1C_2$ and $C_1+C_2=n$ (coming from \eqref{eq:quadratic}), and in the final inequality we used that $C_1-1-X + C_2-1-X=n-2X-2>0$. 
    This is a contradiction to $\Delta < C_1-1$, completing the proof of Lemma~\ref{3.1}.
\end{proof}

The next lemma provides a handy, sufficient condition that guarantees the existence of a path of length 4 between any two vertices.

\begin{lemma}\label{del}
    Let $G$ be a graph with $n$ vertices and $e$ edges such that $e\ge \frac{n^2}{4}+4$. If the minimum degree $\delta$ of $G$ satisfies
    \begin{align*}
        \delta\ge \frac{n}{2}-\sqrt{e-\frac{n^2}{4}}+2,
    \end{align*}
    then any two vertices in $G$ are connected by a path of length exactly 4.
\end{lemma}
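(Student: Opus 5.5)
Fix two vertices $u,v$ and write $s:=\sqrt{e-n^2/4}$, so that $\delta\ge n/2-s+2$. A path of length exactly $4$ from $u$ to $v$ is a sequence $u,a,b,c,v$ of distinct vertices. Set $U:=N(u)\setminus\{v\}$ and $W:=N(v)\setminus\{u\}$; both have size at least $\delta-1$. It suffices to find a vertex $b\notin\{u,v\}$ having a neighbour $a\in U$ and a neighbour $c\in W$ with $a\neq c$. Distinctness is then automatic: $a,c\notin\{u,v\}$ by construction, and $b\neq a,c$ because $b$ is adjacent to both. So I will argue by contradiction and assume that no such $b$ exists.

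\textbf{Partition of $V':=V\setminus\{u,v\}$.} The assumption sorts every $b\in V'$ into one of three types:
\begin{itemize}
\item $b$ has no neighbour in $W$ (call the set of these $P$);
\item $b$ has no neighbour in $U$ (the set $Q$);
\item $N(b)\cap(U\cup W)=\{w\}$ for a single vertex $w\in U\cap W$ (the set $R$).
\end{itemize}
Vertices with no neighbour in $U\cup W$ at all can be put into $P$. These give the degree bounds
\[
d(b)\le n-|W| \text{ for } b\in P,\qquad d(b)\le n-|U| \text{ for } b\in Q,\qquad d(b)\le n-|U\cup W|+1 \text{ for } b\in R,
\]
while $d(u),d(v)\le n-1$.

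\textbf{Second set of constraints.} Every vertex of $U$ has at least $\delta$ neighbours, and these lie in $\{u,v\}\cup P\cup R$, since a vertex of $Q$ has no neighbour in $U$. Hence $|P|+|R|\ge\delta-2$, and symmetrically $|Q|+|R|\ge\delta-2$. Also, since every vertex adjacent to some $w\in U\cap W$ other than $u,v$ lies in $R$, each vertex of $U\cap W$ has degree at most $|R|+2$. This forces $U\cap W$ to be essentially empty (or $R$ to be huge), because $\delta$ is close to $n/2$.

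\textbf{Counting and contradiction.} I would sum degrees:
\[
2e\le 2(n-1)+|P|(n-|W|)+|Q|(n-|U|)+|R|(n-|U\cup W|+1).
\]
This is then combined with $|U|,|W|\ge\delta-1$, with $|P|+|Q|+|R|=n-2$, and with the constraints above. Writing $|P|\approx x$, $|Q|\approx n-x$, and noting that $P\supseteq$ (most of the neighbourhood of $U$) must have size at least about $\delta$ (similarly for $Q$), the expression is maximised at a bipartite-like configuration. That configuration is two halves, each of size at least $\delta$, between which there are no edges. It gives
\[
e\le \frac{n^2}{4}-\Bigl(\delta-\frac n2\Bigr)^2\text{-type bounds},
\]
more precisely something like $e\le \binom{n}{2}-(\delta-1)(n-\delta-1)+O(n)$.

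Substituting $\delta\ge n/2-s+2$ yields $e<n^2/4+s^2=e$, a contradiction. The additive constants $+2$ in $\delta$ and $+4$ in $e$ are there to absorb the $O(1)$ losses coming from $u,v$, from $R$, and from the "$-1$"s. The hypothesis $e\ge n^2/4+4$ also guarantees $s\ge 2$, so that this regime is nonempty and the quadratic behaves as intended.

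The main obstacle is this final optimisation. One has to control $U\cap W$ and $R$ carefully, and choose the right way of pairing the bounds for $P$ and $Q$ with $|U|$ and $|W|$, so that the resulting quadratic in $\delta$ gives exactly the threshold $n/2-s$ with the stated constants. My first attempt would be to treat the cases $U\cap W=\emptyset$ and $U\cap W\neq\emptyset$ separately. In the second case I would use the degree bound $|R|+2\ge\delta$ for a vertex of $U\cap W$ to make $R$ large, and then observe that vertices of $R$ have very small degree, roughly $n-|U\cup W|$. This should already contradict the edge count.
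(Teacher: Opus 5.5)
There is a genuine gap. Your reduction to the non-existence of a suitable middle vertex $b$, and the sorting of $V'$ into $P,Q,R$, are correct. But the step you defer, the final optimisation, is the whole content of the lemma, and the ingredients you list cannot deliver it.

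Once you insert $|U|,|W|\ge\delta-1$, your degree-sum inequality no longer depends on how $V'$ splits between $P$ and $Q$. Take $R=\emptyset$, $U\cap W=\emptyset$ and $|U|=|W|=\delta-1$, which violates none of your constraints. The right-hand side is then $2(n-1)+(n-2)(n-\delta+1)$. At $\delta=n/2-s+2$ this equals $n^2/2+ns-2s$, which is at least $2e=n^2/2+2s^2$ whenever $s\le (n-2)/2$, i.e.\ in essentially the whole range. Nothing in your relaxation stops the vertices of $P$ and of $Q$ from all having degree about $n-\delta\approx n/2+s$ at once. So the claim that the maximum sits at a ``bipartite-like configuration'' is unsupported.

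Your proposed target is also too weak. Even with the $O(n)$ term dropped, $\binom n2-(\delta-1)(n-\delta-1)$ at $\delta=n/2-s+2$ equals $n^2/4+s^2+n/2-4s+3$, which exceeds $e$ whenever $s<n/8+3/4$. An unquantified $O(n)$ error cannot simply be absorbed by the constants $+2$ and $+4$ as you suggest; the losses must be tracked exactly. The paper's argument closes with only $6s-8$ to spare, namely $2e\le 2e+8-3\sqrt{4e-n^2}$. Your first display also has the wrong sign: two disjoint cliques of orders $\delta$ and $n-\delta$ have $n^2/4+(n/2-\delta)^2-n/2$ edges.

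What is missing is a coupling between the two sides. The paper obtains it by bounding the degrees of the neighbours of the endpoints, not of the middle vertices.

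\begin{enumerate}
\item Assume $uv\in E$ and $d(u)\ge d(v)$. Adding the edge $uv$ preserves the hypotheses and cannot create a $u$--$v$ path of length $4$.
\item Your assumption says exactly that distinct $a\in N(u)\setminus\{v\}$ and $c\in N(v)\setminus\{u\}$ have no common neighbour outside $\{u,v\}$.
\item Hence for $a\in X:=N(u)\setminus(N(v)\cup\{v\})$ and $c\in Y:=N(v)\setminus(N(u)\cup\{u\})$ the neighbourhoods are disjoint, so $d(a)+d(c)\le n$.
\item With $\Delta:=\max_{a\in X}d(a)$ this gives $\sum_{a\in X}d(a)+\sum_{c\in Y}d(c)\le |X|\Delta+|Y|(n-\Delta)$, together with $\Delta\le n-\delta_1$ where $\delta_1:=n/2-s+2$.
\end{enumerate}

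The paper combines this with three further facts:
\begin{itemize}
\item $d(z)\le n-1-d(v)$ for every $z\notin N(u)\cup N(v)$;
\item every $w\ne u,v$ has at most one neighbour in $N(u)\cap N(v)$;
\item the resulting bound increases with $d(u)$, so one may take $d(u)=n+|N(u)\cap N(v)|-d(v)$.
\end{itemize}
Together these yield exactly $2e\le 2e+8-3\sqrt{4e-n^2}$. Without this coupling, or an equivalent one, your outline does not reach a contradiction.
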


\begin{proof}
    Assume, for the sake of contradiction, that there exist two vertices $x$ and $y$ that are not connected by a path of length $4$. Without loss of generality, assume that $d(x)\ge d(y)$. 

    If $x$ and $y$ are not adjacent, add an edge between $x$ and $y$ to obtain a new graph $G'$. Note that adding an edge does not decrease the minimum degree, and it increases the number of edges, so $G'$ still satisfies the assumptions of the lemma. If there is a path of length 4 connecting $x$ and $y$ in $G'$, then the same path exists in $G$. Thus, it suffices to consider the case when $x$ and $y$ are adjacent.

    Now, assume that $x$ and $y$ are adjacent. Set $A:=N(x)\cap N(y)$, $X:=N(x)\backslash (A\cup \{y\})$, $Y:=N(y)\backslash (A\cup \{x\})$, and $S:=V\backslash (N(x)\cup N(y))$. Since there are no paths of length 4 connecting $x$ and $y$, we conclude that
    
    \begin{itemize}
        \item[\textbf{(P)}]\label{itm:P} any two distinct vertices $a,b$ with $a\in A\cup X$ and $b\in A\cup Y$, have no common neighbors in $V\backslash \{x,y\}$. 
    \end{itemize}

    We proceed to show that, under our minimum degree assumptions from the lemma this property implies there are too few edges. We will do that by upper bounding the sum of the degrees of vertices in each of the sets $S,A,X,$ and $Y$. 
    
    We start with $S$. Note that by property \textbf{(P)}, any vertex $z\in S$ can have at most one neighbor in $A \cup X$ or at most one neighbor in $A \cup Y$. This implies that $d(z) \le n-3-(\min(|A \cup X|,|A \cup Y|)-1)=n-1-d(y).$  Summing up these inequalities for every $z \in S$, we get   
    \begin{align}\label{S}
        \sum_{z\in S}d(z)\le (n-d(x)-d(y)+|A|)(n-d(y)-1).
    \end{align}

    We now turn to bounding the degree of vertices in $A$. Let us first denote, in order to simplify notation, by $\delta_1:=\frac{n}{2}-\sqrt{e-\frac{n^2}{4}}+2$ and by $\Delta_1:=n-\delta_1$. Note that the lemma assumption guarantees $\delta \ge \delta_1$. We claim~that 
    \begin{align}\label{A}
        \sum_{z\in A}d(z) \le (|A|-1)(n-\Delta_1-1)+\Delta_1+3.
    \end{align}
    Indeed, if $|A| = 1$ the unique vertex in $A$ can not have neighbors in both $X$ and $Y$ by property \textbf{(P)} so its degree is upper bounded by $(n-1)-|Y|= n-d(y)+1\le n-\delta_1+1=\Delta_1+1$, as desired. On the other hand, if $|A| \ge 2$, we get
    \begin{align}\label{eq:A2}
        \sum_{z\in A}d(z)=\sum_{v\in V} |N(v) \cap A| \le n-2+2|A| \le (|A|-1)(n-\Delta_1-1)+\Delta_1+3.
    \end{align}
    where in the first inequality we used that $|N(v) \cap A| \le 1$, for any $v \in V\backslash \{x,y\}$, which, once again, holds by property \textbf{(P)}, and for the second inequality, which is equivalent to $0\le (|A|-2)(n-\Delta_1-3)$, we used that $|A| \ge 2$, and that $ \delta_1 >2 \implies \Delta_1 = n-\delta_1 \le n-3$.

    Turning now to bounding the degrees of vertices in $X$ and $Y$, let $\Delta:=\max \{d(z)|\ z\in X \}$, where we set $\Delta:=0$ if $X=\emptyset$. Assuming $X \neq \emptyset$, let $z_0\in X$ be a vertex with $d(z_0)=\Delta$. Then,
    \begin{align}\label{X}
        \sum_{z\in X}d(z)\le |X| \Delta =(d(x)-|A|-1)\Delta.
    \end{align}
    For any vertex $z\in Y \cup (A \setminus N(z_0))$, we have $d(z)\le n-\Delta$. Indeed, if $X=\emptyset$, this is trivial (as $\Delta=0$), and otherwise it holds since $z$ and $z_0$ can have no common neighbors, by property \textbf{(P)}, except possibly $x$ if $z \in A \setminus N(z_0)$, in which case $d(z) \le |(V \setminus \{z_0\}) \setminus (N(z_0) \setminus \{x\})| \le n-1-(\Delta-1)=n-\Delta$. Since also $d(z)\ge\delta_1$, this implies $\Delta\le n-\delta_1=\Delta_1$, where we know at least one such $z$ exists since $|A \cup Y|=d(y)-1 > 1$, and $|A \cap N(z_0)| \le 1$ by property (\textbf{P}). Moreover, this inequality gives,
    \begin{align}\label{Y}
        \sum_{z\in Y}d(z) \le |Y|(n-\Delta)= (d(y)-|A|-1)(n-\Delta).
    \end{align}
    Summing up \eqref{X}, and \eqref{Y}, and using that $\Delta\le\Delta_1$ and $d(x)\ge d(y)$, we get
    \begin{align}\label{X+Y}
        \sum_{z\in X \cup Y}d(z) \le (d(x)-d(y))\Delta+(d(y)-|A|-1)n \le (d(x)-d(y))\Delta_1+(d(y)-|A|-1)n.
    \end{align}
    Using the handshake lemma and summing \eqref{S}, and \eqref{X+Y},  we derive
    \begin{align*}
       2e \le\ &d(x)+d(y)+(n-d(x)-d(y)+|A|)(n-d(y)-1) +(d(x)-d(y))\Delta_1 + (d(y)-|A|-1)n + \sum_{z\in A}d(z)
    \end{align*}
    Since $d(x)$ appears on the right-hand side of this expression with the coefficient $1-n+d(y)+1+\Delta_1\ge \delta_1+\Delta_1-n+2=2>0$, and $d(x)\le n+|A|-d(y)$, we can replace it by $n+|A|-d(y)$ and obtain 
    \begin{align*}
        2e &\le n+|A|+(n+|A|-2d(y))\Delta_1 + (d(y)-|A|-1)n+\sum_{z\in A}d(z)\\
        &=d(y)(n-2\Delta_1)+n\Delta_1-|A|(n-\Delta_1-1) + \sum_{z\in A}d(z)\\
        &\overset{\eqref{A}}{\le}  d(y)(n-2\Delta_1)+n\Delta_1+2\Delta_1-n+4.
    \end{align*}

    \noindent Since $e\ge \frac{n^2}{4}+4 \implies \Delta_1\ge \frac{n}{2}$, using  $d(y)\ge \delta_1$ we obtain
    \begin{align*}
        2e &\le\delta_1(n-2\Delta_1) +n\Delta_1 +2\Delta_1-n+4\\
        &=n^2-2\delta_1\Delta_1+2\Delta_1-n+4\\
        &=n^2-2(\delta_1-2)(\Delta_1+2)+3(\delta_1-\Delta_1)-4\\
        &=8+2e-3\sqrt{4e-n^2},
    \end{align*}
    which contradicts the assumption that $e> \frac{n^2}{4}+4$, completing the proof of Lemma~\ref{del}.
\end{proof}

We note that this lemma is tight in terms of the distance. Indeed, if we take a complete graph on $n-2$ vertices, add two vertices $u,v$ adjacent respectively to two disjoint sets of vertices $A,B$, both of size given by the lower bound on the minimum degree in the above lemma, and then remove all the edges between the sets $A$ and $B$, we obtain a graph with diameter four and more than $e$ edges.

The following simple yet useful ``greedy'' observation will be used several times in the proof of Theorem~\ref{main}.

\begin{lemma}\label{greedy}
    Let $G(V,E)$ be a graph with minimum degree $\delta$. Let $k\ge 0$ be an integer, and let $S\subset V$ be a vertex set such that $|S|\le \delta -k$. Then, for any vertex $v\in V\backslash S$, there is a path $P$ of length $k$ starting at $v$ such that all vertices in $S$ are not in $P$.
\end{lemma}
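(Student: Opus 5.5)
The plan is to build the path greedily, one vertex at a time, starting from $v$ and always avoiding $S$ together with the vertices already chosen. Set $v_0:=v$. Suppose that for some $0\le i<k$ we have already constructed a path $v_0v_1\cdots v_i$ of length $i$ whose vertices all lie in $V\setminus S$. We then look for a next vertex $v_{i+1}$ in $N(v_i)\setminus\left(S\cup\{v_0,\dots,v_{i-1}\}\right)$.

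The set of forbidden neighbours of $v_i$ is contained in $S\cup\{v_0,\dots,v_{i-1}\}$. This set has size at most $|S|+i\le \delta-k+i\le \delta-1$, because $i\le k-1$. Meanwhile $d(v_i)\ge\delta$. Hence $v_i$ has at least one neighbour outside the forbidden set, and we take it as $v_{i+1}$. The new vertex is distinct from all earlier vertices and is not in $S$, so $v_0v_1\cdots v_{i+1}$ is again a path avoiding $S$.

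Starting from the path $v_0$ of length $0$ (valid because $v\notin S$) and iterating $k$ times gives a path of length exactly $k$ that starts at $v$ and avoids $S$. The case $k=0$ is trivial. There is no real obstacle. The only point to check is the counting: at step $i$ only $i$ earlier path vertices are excluded, since $v_i$ is not its own neighbour, and this is exactly what the hypothesis $|S|\le\delta-k$ is designed to accommodate.
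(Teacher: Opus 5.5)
Your proof is correct and is essentially the paper's argument. The paper takes a longest path from $v$ avoiding $S$ and derives a contradiction if its length is below $k$, while you extend the path step by step. Both rest on the same count, namely that the forbidden set $S\cup\{v_0,\dots,v_{i-1}\}$ has size at most $|S|+i<\delta$.
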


\begin{proof}
    Let $v_0 v_1 \dots v_i$ be a longest path avoiding $S$, with $v_0=v$. If $i \ge k$, then $v_0,\ldots, v_k$ is a desired path. Otherwise, since $v_i$ has at least $\delta$ neighbors, and $|S \cup \{v_j\ |\ 0\le j\le i,\  j\ne i\}| = |S| + i < |S| + k\le \delta$, we can choose a neighbor $v_{i+1}$ of $v_i$ that is not in $S \cup \{v_j\ |\ 0\le j\le i,\  j\ne i\}$, giving us a contradiction to the maximality.
\end{proof}

We will also make use of the following classical result, proved independently by Edwards in an unpublished manuscript, and by Khad\v{z}iivanov and Nikiforov in \cite{KH}.

\begin{lemma}\label{book}
    Let $G$ be a graph with $n$ vertices and $e> n^2/4$ edges. Then, $G$ contains two adjacent vertices $p$ and $q$ with more than $n/6$ common neighbors.
\end{lemma}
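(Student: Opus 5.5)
The plan is to prove Lemma~\ref{book} by double counting triangles through edges, arguing by contradiction. Write $t(uv):=|N(u)\cap N(v)|$ for an edge $uv$, and suppose every edge has $t(uv)\le n/6$.

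\textbf{Step 1: lower bound on the total codegree.} For every edge $uv$ we have $t(uv)\ge d(u)+d(v)-n$. Summing over all edges gives
\begin{align*}
\sum_{uv\in E} t(uv)\ \ge\ \sum_{uv\in E}\bigl(d(u)+d(v)\bigr)-ne\ =\ \sum_{v} d(v)^2-ne\ \ge\ \frac{4e^2}{n}-ne,
\end{align*}
where the last step is Cauchy--Schwarz. The left-hand side equals $3T$, where $T$ is the number of triangles. This is the classical Moon--Moser/Goodman-type bound. If we only use the averaging $\max_{uv} t(uv)\ge 3T/e\ge 4e/n-n$, we get something positive but tiny when $e$ is just above $n^2/4$. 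Plain averaging therefore does not give $n/6$; a stronger argument exploiting the structure of edges with small codegree is needed.

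\textbf{Step 2: the actual argument (Edwards / Khad\v{z}iivanov--Nikiforov).} Since $e>n^2/4$, the graph contains a triangle (Mantel). More strongly, I would aim to find a triangle $xyz$ with $d(x)+d(y)+d(z)>3n/2$. To do this, sum $d(x)+d(y)+d(z)$ over all triangles, or consider an edge maximizing $d(u)+d(v)$. The cleanest route I would try is the following: take a triangle $xyz$ and count, for each vertex $w$, its number of neighbours in $\{x,y,z\}$. Then
\begin{align*}
d(x)+d(y)+d(z)=\sum_w |N(w)\cap\{x,y,z\}| \le n+t(xy)+t(yz)+t(xz),
\end{align*}
because a vertex adjacent to two of $x,y,z$ lies in the corresponding common neighbourhood. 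Hence if $d(x)+d(y)+d(z)>3n/2$, one of the three edges has $t>n/6$, which is exactly the claim.

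\textbf{Step 3: existence of a heavy triangle (the main obstacle).} It remains to show that a graph with $e>n^2/4$ edges has a triangle whose degree sum exceeds $3n/2$. Here I would use the identity $\sum_{uv\in E}(d(u)+d(v))=\sum_v d(v)^2\ge 4e^2/n>ne$, which produces an edge with $d(u)+d(v)>n$; hence $t(uv)\ge d(u)+d(v)-n\ge 1$, so triangles on heavy edges exist. Obtaining $3n/2$ rather than $n$ requires weighting: I would consider the sum over triangles $\sum_{xyz}\bigl(d(x)+d(y)+d(z)\bigr)=\sum_{uv\in E} t(uv)\,(d(u)+d(v))/\ldots$ and compare it to $\tfrac32 n\,T$ using the correlation between $t(uv)$ and $d(u)+d(v)-n$. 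Alternatively, one can induct on $n$ by deleting a minimum-degree vertex: if $\delta\le n/2$, the remaining graph still has more than $(n-1)^2/4$ edges, and the induction gives a book of size more than $(n-1)/6$, which must then be upgraded. This induction step, together with the weighted averaging needed to get the sharp constant $1/6$, is the delicate part. Since the statement is a classical result quoted from \cite{KH}, in the paper I would ultimately cite it rather than reprove it in full.
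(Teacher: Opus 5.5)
Your Steps 1 and 2 are correct as far as they go. Step 2 even settles the case $\delta>n/2$: a triangle exists by Mantel, and any triangle then has degree sum $>3n/2$. But Step 3 asks for something false, so this route cannot be completed.

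Take $n\ge 34$ even and $V=A\cup D$ with $|A|=n/2-3$ and $|D|=n/2+3$. Join every vertex of $A$ to every vertex of $D$, and add a matching $M$ of $10$ edges inside $D$. Then $e=n^2/4-9+10=n^2/4+1$, and the only triangles are $ayy'$ with $a\in A$ and $yy'\in M$. Since $d(a)=n/2+3$ and $d(y)=d(y')=n/2-2$, every triangle has degree sum exactly $3n/2-1$. So no triangle has degree sum exceeding, or even reaching, $3n/2$, and your weighted comparison of $\sum_{xyz}(d(x)+d(y)+d(z))$ with $\tfrac32 nT$ fails too. Worse, $t(ay)=t(ay')=1$ and $t(yy')=n/2-3$, so every triangle has $t(ay)+t(ay')+t(yy')=n/2-1<n/2$. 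The lemma holds in this graph only because one edge of each triangle carries almost all the codegree. Consequently, no argument that selects a single triangle and pigeonholes its three codegrees against $n/2$ can work.

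The induction alternative is also not a proof. Deleting a vertex of degree at most $n/2$ gives a book of size $>(n-1)/6$ in $G-v$, and nothing you say upgrades this to $>n/6$. That loss is precisely the difficulty, and $\delta\le n/2$ is exactly the case your Step 2 leaves open. So your proposal does not establish the lemma.

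The paper does not prove it either; it simply quotes the classical theorem of Edwards and Khad\v{z}iivanov--Nikiforov. Your final decision to cite it therefore agrees with the paper. You should, however, drop the suggestion that the classical argument goes through a triangle of degree sum $>3n/2$, since such a triangle need not exist.
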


\section{Rainbow cycles of odd length}\label{4}
\noindent In this section, we present the proof of Theorem \ref{main}.
\begin{proof}[Proof of Theorem \ref{main}]
    We first show the upper bound on $f(n,e,C_{2k+1})$.
    Let $G$ be the graph obtained by the disjoint union of two cliques $A,B$ of sizes $\left\lceil\frac{n}{2}+\sqrt{e+n-\frac{n^2}{4}}\right\rceil$ and $\left\lfloor\frac{n}{2}-\sqrt{e+n-\frac{n^2}{4}}\right\rfloor$, respectively. 
    Then, the number of edges of $G$ satisfies
    $$\binom{n}{2}-|A||B|\ge e.$$
    Let us color all edges in $A$ with different colors, and color all edges in $B$ with different colors chosen from those used for the edges of $A$. 
    It is clear that all cycles in $G$ are rainbow. 
    Thus, $f(n,e,C_{2k+1})$ is upper bounded by the number of colors used in $G$, namely
    \begin{align*}
        \binom{|A|}{2}\le \frac{e}{2}+\frac{n}{2}\sqrt{e-\frac{n^2}{4}}+ o(n^2).
    \end{align*}

    Next, we prove the lower bound on $f(n,e,C_{2k+1})$.
    We fix an integer $k\ge 4$ and treat it as a constant throughout the proof.
    It suffices to show that for any constant $0<\varepsilon<0.01$, 
    and any positive integers $n,e$ such that $e>n^2/4$, we have
    \begin{align}\label{f}
        f(n,e,C_{2k+1})\ge \frac{e}{2}+\frac{n}{2}\sqrt{e-\frac{n^2}{4}}-\varepsilon n^2-2k^2 \eps^{-26}=:g(n,e).
    \end{align}
    Note first that since $f(n,e,C_{2k+1}) \ge 0$, inequality \eqref{f} is trivially satisfied unless $2k^2 \eps^{-26} \le n^2/2$. In particular, we may and will assume that $n \ge 2k\eps^{-13}$. We will prove \eqref{f} by induction on $n$, with the preceding observation serving as the base case. So let us suppose that $f(n-1,e',C_{2k+1}) \ge g(n-1,e')$ holds, for some $n \ge 2k\eps^{-13}$ and every $\frac{(n-1)^2}4< e' \le \binom{n-1}{2}$, and our goal is to show $f(n,e,C_{2k+1}) \ge g(n,e)$, for every $\frac{n^2}4 < e \le \binom{n}{2}$.
    To this end, let $G=(V,E)$ be a graph with $n$ vertices and $e>n^2/4$ edges with an edge-coloring using  $f(n,e,C_{2k+1})$ colors satisfying the condition that every $C_{2k+1}$ is rainbow.
    
    Since by deleting a vertex of minimum degree $\delta$ in $G$ we obtain an $n-1$ vertex graph with $e-\delta$ edges, colored using $f(n,e,C_{2k+1})$ colors and still satisfying the condition that every $C_{2k+1}$ is rainbow, we can conclude by our inductive assumption that $f(n,e,C_{2k+1})\ge g(n-1,e-\delta).$ So, we are done unless $g(n,e)>g(n-1,e-\delta).$ 
    That is
    $$\frac{e}{2}+\frac{n}{2}\sqrt{e-\frac{n^2}{4}}-\varepsilon n^2-2k^2 \eps^{-26}> \frac{e-\delta}{2}+\frac{n-1}{2}\sqrt{e-\delta-\frac{(n-1)^2}{4}}-\varepsilon (n-1)^2-2k^2 \eps^{-26},$$
    which is equivalent to
    \begin{align}\label{g}
        \delta > (n-1)\sqrt{e-\delta -\frac{(n-1)^2}{4}}-n\sqrt{e-\frac{n^2}{4}}+4\varepsilon n-2\varepsilon.
    \end{align}

    \noindent This inequality provides a lower bound on $\delta$ of differing strength depending on whether $e$ is much larger than $n^2/4$ or only slightly larger. Since we need to be very careful with our estimates, we divide the proof into two cases according to how large $e$ is.

    \noindent{\bf \underline{Case 1.}} We have $e \ge \left(\frac{1}{4}+\varepsilon^6\right)n^2$.
    
    For simplicity, set $\varepsilon_1:=\frac{e}{n^2}-\frac{1}{4}\le \frac14$. Note that, by the case assumption, we have $\varepsilon_1\ge \varepsilon^6$. Then, inequality \eqref{g} can be transformed into 
    \begin{align}
    \delta &> 
    \label{sqrt}
         (n-1)\left(\sqrt{\varepsilon_1 n^2-\delta +\frac{2n-1}{4}}-\sqrt{\varepsilon_1 n^2}\right)-\sqrt{\varepsilon_1}n+4\varepsilon n-2\eps.
    \end{align}
    By using the inequality $\sqrt{1+x}-1 \ge \frac x2 -\frac{x^2}{4}$, which holds for any real $x \in [-1/2,1/2]$, with $x=\frac{2n-1-4\delta}{4\eps_1n^2},$ for which we indeed have $|x|\le  \frac{1}{2\eps_1 n} \le \frac12$, we get 
    
    \begin{align*}
        \sqrt{\varepsilon_1 n^2-\delta +\frac{2n-1}{4}}-\sqrt{\varepsilon_1 n^2}=\sqrt{\eps_1 n^2} \cdot (\sqrt{1+x}-1) &\ge \sqrt{\eps_1 n^2} \cdot \left(\frac x2 -\frac{x^2}{4}\right)\\
        &=\frac{2n-1-4\delta}{8\sqrt{\eps_1n^2}}-\frac{(2n-1-4\delta)^2}{64\eps_1^{3/2}n^3}
        \\
        &> \frac{1}{4\sqrt{\eps_1}}-\frac{\delta}{2\sqrt{\eps_1} \cdot n}-\frac{1}{2\eps_1^{3/2}\cdot n}.
    \end{align*}
    Substituting this into inequality \eqref{sqrt}, we derive
    \begin{align*}
        \delta> \frac{n-1}{4\sqrt{\varepsilon_1}} -\frac{\delta}{2\sqrt{\varepsilon_1}} - \frac{1}{2\eps_1^{3/2}}-\sqrt{\varepsilon_1}n+4\varepsilon n-2\eps > \frac{n}{4\sqrt{\varepsilon_1}} -\frac{\delta}{2\sqrt{\varepsilon_1}} -\sqrt{\varepsilon_1}n+4\varepsilon n-\frac{1}{\eps_1^{3/2}}.
    \end{align*}
    It follows that
    \begin{align*}
        \delta >\frac{\frac{1}{2}-2\varepsilon_1+8\varepsilon\sqrt{\varepsilon_1}}{1+2\sqrt{\varepsilon_1}}n - 2\eps_1^{-3/2}  
        = \left( \frac{1}{2}-\sqrt{\varepsilon_1} \right)n +\frac{8\varepsilon\sqrt{\varepsilon_1}}{1+2\sqrt{\varepsilon_1}}n-2\eps_1^{-3/2}.
    \end{align*}
    Since $\varepsilon^6\le\varepsilon_1\le \frac{1}{4}$, we obtain $\frac{8\varepsilon\sqrt{\varepsilon_1}}{1+2\sqrt{\varepsilon_1}}\geq \frac{8\varepsilon^4}{1+2\cdot (1/2)}=4\varepsilon^4$, and combined with $\eps^4 n \ge \eps_1^{-3/2}$ we get
    \begin{align}\label{case1del}
        \delta &> \frac{n}{2}-\sqrt{e-\frac{n^2}{4}} +2\varepsilon^4 n \notag \\ &\ge \frac{n}{2}-\sqrt{e-2kn-\frac{n^2}{4}}-\sqrt{2kn}+2\eps^4 n \notag \\ &\ge \frac{n}{2}-\sqrt{e-2kn-\frac{n^2}{4}}+2k,
    \end{align}
    where in the second inequality we used $e-n^2/4 \ge \eps^6 n^2 \ge 2kn$, and in the third $n \ge 2 k \eps^{-8} \implies 2\eps^4 n \ge \sqrt{2kn}+2k$. This gives us the desired lower bound on $\delta$ for this case.
    
    \noindent Finally, by our case assumption we also have
    \begin{align}\label{gar}
        e\ge \frac{n^2}{4} +\eps^6 n^2 > \frac{n^2}{4}+2kn.
    \end{align}

    \noindent Inequalities \eqref{case1del} and \eqref{gar} allow us to apply Lemma~\ref{del} to any induced subgraph $G'$ of $G$ obtained by deleting $2k-4$ of its vertices. Indeed, let $n',e',\delta'$ be the number of vertices in $G'$, the number of edges in $G'$, and the minimum degree of $G'$, respectively.
    Then, we have $n'=n-2k+4$, $e'\ge e-(2k-4)n$, and $\delta'\ge \delta-(2k-4)$.
    So,
    \begin{align*}
        e'\ge e-(2k-4)n \overset{\eqref{gar}}{>} \frac{n^2}{4}+2kn-(2k-4)n\ge \frac{(n')^2}{4}+4.
    \end{align*}
    Furthermore, we have 
    \begin{align*}
        \delta'\ge \delta - (2k-4)\overset{\eqref{case1del}}{\ge} \frac{n}{2}-\sqrt{e-2kn-\frac{n^2}{4}}+2k-(2k-4)
        >\frac{n'}{2}-\sqrt{e'-\frac{(n')^2}{4}}+2,
    \end{align*}
    where in the final inequality we used that $e'-\frac{(n')^2}{4}\geq e-2kn-\frac{n^2}{4}$.
So the conditions of Lemma~\ref{del} are satisfied for $G'$. Since $G'$ was arbitrary, we conclude that any two vertices of $G$ can be joined by a path of length exactly $4$ avoiding an arbitrary set of up to $2k-4$ vertices. 

According to Lemma~\ref{3.1}, $G$ contains a vertex set $A$ such that the distance between any two vertices in $A$ is at most 3, and
    \begin{align}\label{Alower}
        |A|\ge \frac{n}{2}+\sqrt{e-\frac{n^2}{4}+\frac{n}{2}}>\frac{n}{2}+\sqrt{e-\frac{n^2}{4}}.
    \end{align}
    We say an edge is {\bf good} if at least one of its endpoints lies in $A$. 
    To conclude the argument we now show that any two good edges lie on a common cycle of length $2k+1$ in $G$, and hence have distinct colors.
    
    Let $xy$ and $zw$ be two distinct good edges. Let $P_1$ be a shortest path (of length possibly zero) connecting an endpoint of $xy$ and an endpoint of $zw$. Without loss of generality, assume that the endpoints of $P_1$ are $x$ and $z$. Since $xy$ and $zw$ are both good edges, there exist an endpoint of $xy$ and an endpoint of $zw$ that both lie in $A$ and hence have distance at most $3$. Therefore, we have $\ell(P_1)\le 3$.
    Since, by inequality \eqref{case1del}, $\delta \ge 2k$, by Lemma~\ref{greedy} there exists a path $P_2$ of length $2k-5-\ell(P_1)\ge 0$ connecting $y$ and some vertex $u$ such that $P_2$ and the path $wzP_1x$ are vertex-disjoint. 
    As argued above, we can find a path $P_3$ of length exactly $4$ joining $w$ and $u$, while avoiding the $2k-4$ inner vertices of the path $wzP_1xyP_2u$.
    Then, $wzP_1xyP_2uP_3w$ is a cycle of length exactly $2k+1$ containing both edges $xy$ and $zw$.

    \begin{figure}[h]
\centering
\captionsetup{width=0.7\textwidth}
\includegraphics[width=0.35\linewidth]{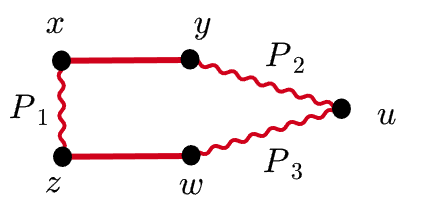}
    \caption{Illustration of how a cycle of length $2k+1$ containing both good edges $xy$ and $zw$ is constructed. Straight lines depict edges, curvy ones denote paths. We have $0\le \ell(P_1) \le 3, \:\ell(P_2)=2k-5-\ell(P_1),$ and $\ell(P_3)=4$.}
\label{fig:F1}
\end{figure}

    Therefore, any two good edges lie on a common cycle of length $2k+1$, and hence have distinct colors.
    By inequality \eqref{Alower}, the number of good edges, so also the number of distinct colors used, is at least
    \begin{align*}
        e-\binom{n-|A|}{2}\ge e- \frac{1}{2}(n-|A|)^2
        > e- \frac{1}{2}\left(\frac{n}{2}-\sqrt{e-\frac{n^2}{4}}\right)^2
        =\frac{e}{2}+\frac{n}{2}\sqrt{e-\frac{n^2}{4}}
        \ge g(n,e),
    \end{align*}
    completing the induction step in this case.

    \noindent{\bf \underline{Case 2.}} We have $e< \left(\frac{1}{4}+\varepsilon^6\right)n^2$.

    Note that inequality \eqref{g} implies
    \begin{align}
        0<\delta+n\sqrt{e-\frac{n^2}{4}} -(n-1)\sqrt{e-\delta -\frac{(n-1)^2}{4}}=:h(n,e,\delta).
    \end{align}
    Note that $h$ is increasing with respect to $\delta$, and $h\left(n,e,\frac{n}{2}-\sqrt{e-\frac{n^2}{4}}-\frac{1}{2}\right)=0$. We derive
    \begin{align}\label{del2}
        \delta> \frac{n}{2}-\sqrt{e-\frac{n^2}{4}}-\frac{1}{2}>\frac{n}{2}-\varepsilon^3n-\frac{1}{2}.
    \end{align}

    Since we have this stronger lower bound on the minimum degree compared to the previous case, we are able to get the following better variant of Lemma~\ref{del}.

    \begin{claim}\label{claim1}
        Let $S\subset V$ be a vertex set with $|S|\le 5k$. Then, either any two vertices $x$ and $y$ in $V\backslash S$ are connected by a path $P$ of length 2 or 3 such that no vertex in $S$ lies on $P$, or the number of colors used on $G$ is at least $g(n,e)$. 
    \end{claim}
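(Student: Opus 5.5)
Fix $S$ with $|S|\le 5k$ and $x,y\in V\setminus S$, and suppose no path of length $2$ or $3$ joins $x$ and $y$ while avoiding $S$. We will show that the number of colors is then at least $g(n,e)$. We work in $G'=G-S$, which has $n'\ge n-5k$ vertices and minimum degree $\delta'\ge \delta-5k>\frac n2-\eps^3 n-5k-\tfrac12$ by \eqref{del2}.

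\textbf{Step 1: rule out a common neighbour.} Set $N_x=N(x)\setminus S$ and $N_y=N(y)\setminus S$. If $N_x\cap N_y\setminus\{x,y\}\neq\emptyset$ there is a path of length 2, so $N_x\setminus\{y\}$ and $N_y\setminus\{x\}$ are disjoint. If some $a\in N_x\setminus\{y\}$ were adjacent to some $b\in N_y\setminus\{x\}$, the path $xaby$ would have length 3. Hence there are no edges between $N_x\setminus\{y\}$ and $N_y\setminus\{x\}$. Both sets have size at least $\frac n2-\eps^3 n-O(k)$.

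\textbf{Step 2: the graph is almost two disjoint cliques.} The sets $N_x$ and $N_y$ cover all but $O(\eps^3 n)$ vertices and span no edges between them. Therefore
\[
e\le \binom n2-\Bigl(\frac n2-\eps^3n-O(k)\Bigr)^2+O(\eps^3 n^2)\le \frac{n^2}{4}+O(\eps^3 n^2),
\]
which is consistent with $e>n^2/4$. So this configuration is genuinely possible, and we must extract many colors from it. Since $e>n^2/4$ while roughly $n^2/4$ pairs are missing between the two sides, each side must be nearly complete: each of $G[N_x]$ and $G[N_y]$ misses only $O(\eps^3 n^2)$ edges. Every vertex has degree at least about $n/2-\eps^3 n$, and the exceptional vertices (those outside $N_x\cup N_y$, or with many non-neighbours on their own side) number at most $O(\eps^{3/2}n)$.

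\textbf{Step 3: lots of distinct colors.} In a near-clique $Q$ on about $n/2$ vertices with minimum degree close to $|Q|$, any two edges of $Q$ lie on a common $C_{2k+1}$. To see this, join their endpoints by short paths inside $Q$ (any two vertices of $Q$ have about $n/2$ common neighbours), extend greedily using Lemma~\ref{greedy}, and close the cycle through a common neighbour, adjusting parity with a path of length 2 or 3 as needed. So all edges within the larger side, minus the exceptional vertices, get distinct colors. This gives at least
\[
\binom{n/2-O(\eps^{3/2} n)}{2}\ \ge\ \frac{n^2}{8}-O(\eps^{3/2})n^2
\]
colors. On the other hand, $g(n,e)\le \frac{e}{2}+\frac n2\sqrt{\eps^6n^2}-\eps n^2\le \frac{n^2}{8}+O(\eps^3 n^2)-\eps n^2$. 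Since $\eps^{3/2}\ll\eps$, the count from the near-clique exceeds $g(n,e)$, as required.

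\textbf{Main obstacle.} The delicate part is Step 2: making the count of edges quantitatively precise so that both sides are near-cliques with $O(\eps^{3/2}n)$ exceptional vertices. This must be done carefully because the margin $e-n^2/4$ can be tiny. I would first bound the number of non-edges inside each side by comparing with $\binom n2-|N_x||N_y|$, which yields the needed sparsity of missing edges directly. The exceptional vertices are then removed by Markov's inequality applied to the non-degrees within each side.
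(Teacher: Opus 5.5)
Your proof is correct. It has the same skeleton as the paper's: the two restricted neighbourhoods are disjoint with no edges between them, one side is a near-clique in which any two edges lie on a common $C_{2k+1}$, and its edges are counted against $g(n,e)\le \frac{n^2}{8}+\frac12\eps^3n^2+\frac12\eps^6n^2-\eps n^2$. Where you differ is in how you show that side is nearly complete.

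\textbf{Your route.} You use the global count $e>n^2/4$. At least $|X||Y|\ge \frac{n^2}{4}-\eps^3n^2-O(kn)$ non-edges run between the sides, so only about $\eps^3n^2$ non-edges lie inside them. Markov's inequality then lets you discard $O(\eps^{3/2}n)$ exceptional vertices. This costs roughly $\eps^{3/2}n^2$ colours, which is harmless: $\eps<0.01$ gives $\sqrt{\eps}<0.1$, so the loss stays below $\eps n^2-\frac12\eps^3n^2-\frac12\eps^6n^2$.

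\textbf{The paper's route.} It needs neither the edge count nor an exceptional set. A vertex $z\in Y$ has no neighbours in $X\cup\{x\}$, so by \eqref{del2} it has at least $\delta-(n-|X|-|Y|-1)\ge \frac n2-3\eps^3n-10k-\frac52$ neighbours inside $Y$. Hence \emph{every} vertex of the smaller side has degree $|Y|-O(\eps^3 n)$ in $G[Y]$, in particular at least $|Y|/2+5k$, so any two vertices of $Y$ share $10k$ common neighbours. Counting $\frac12|Y|$ times this minimum degree gives the bound with only an $O(\eps^3n^2)$ loss. So the step you flag as the ``main obstacle'' follows directly from the minimum degree, and $e>n^2/4$ is not needed there. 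The same local argument would also give you the larger side without Markov.

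\textbf{Two small points on Step 3.}
\begin{enumerate}[(i)]
\item No parity adjustment is needed inside a near-clique. The explicit cycles suffice: $q\,p\,a\,z\,w\,P\,u\,b$ for disjoint edges, with a greedy path $P$ of length $2k-5$, and $q\,p\,z\,P\,u\,b$ for edges sharing a vertex, with $P$ of length $2k-3$.
\item The edge count of $Q$ is $\binom{|Q|}{2}$ minus the $O(\eps^3n^2)$ missing pairs, not $\binom{|Q|}{2}$ itself. This does not affect your final estimate $\frac{n^2}{8}-O(\eps^{3/2})n^2$.
\end{enumerate}
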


    \begin{proof}
        Let $G_1:=G[V\backslash S]$. Assume, that there exist two vertices $x$ and $y$ in $G_1$ which are not connected by a path of length 2 or 3 in $G_1$. Set $X:=N(x)\backslash (S\cup \{y\})$ and $Y:=N(y)\backslash (S\cup\{x\})$. Then, $X\cap Y=\emptyset$ and there are no edges between $X$ and $Y$. 
        Without loss of generality, assume that $|X|\ge |Y|$. Then, we have $|Y|\le n/2$.
        By inequality \eqref{del2}, and $|S|\le 5k$, we have $|X|\ge |Y|\ge d(y)-|S|-1\ge \frac{n}{2}-\varepsilon^3n-\frac{3}{2}-5k$. Then, for any vertex $z\in Y$, since $z$ has no neighbors in $N(x)\cup \{x\}$, the number of neighbors of $z$ in $Y$ is at least
        \begin{align}\notag
            \delta-|V\backslash(X\cup Y\cup \{x\})|
            &\ge \frac{n}{2}-\varepsilon^3n-\frac{1}{2}-n+1+|X|+|Y|\\ \notag
            &\ge\frac{n}{2}-\varepsilon^3n-\frac{1}{2}-n+1+2\left(\frac{n}{2}-\varepsilon^3n-\frac{3}{2}-5k\right)\\ \label{12}
            &= \frac{n}{2}-3\varepsilon^3n-10k-\frac{5}{2}\\ 
            \label{cap}
            &\ge \frac{n}{4}+5k\ge \frac{|Y|}{2}+5k.
        \end{align}
        Let $G_2:=G[Y]$. So, we have just shown that the minimum degree of $G_2$ is at least $\frac{|Y|}{2}+5k$.
        Consequently, any two vertices in $Y$ have at least $10k$ common neighbors in $Y$. We next show that any two edges in $G_2$ lie on a common cycle of length $2k+1$.

        Let $pq$ and $zw$ be any two non-adjacent edges in $G_2$. 
        Then, $p$ and $z$ have at least $10k$ common neighbors in $Y$. Let $a$ be a common neighbor of $p$ and $z$ in $Y\backslash \{q,w\}$. 
        By Lemma~\ref{greedy} and inequality \eqref{cap}, there exists a path $P$ in $G_2$ of length $2k-5$ connecting $w$ and a vertex $u$ such that $q,p,a,z$ are all not in $P$. Since $u$ and $q$ have at least $10k$ common neighbors in $Y$. There exists a common neighbor $b$ of $u$ and $q$ such that $b\notin\{p,a,z,w\}$ and $b$ is not on $P$. Then, $pq$ and $zw$ lie on the cycle $qpazwPub$ of length $2k+1$. See Figure~\ref{fig:P1}, for an illustration.

        Let $pq$ and $pz$ be two distinct edges in $G_2$. Applying Lemma \ref{greedy} yields a path $P$ in $G_2$ of length $2k-3$ connecting $z$ and a vertex $u$ such that $q,p$ are not on $P$. Similarly, there exists a common neighbor $b$ of $u$ and $q$ such that $b\notin\{p,z\}$ and $b$ does not lie on $P$. Then, $pq$ and $pz$ belong to a cycle $qpzPub$ of length $2k+1$. See Figure~\ref{fig:P2}, for an illustration.

\begin{figure}[h]
\begin{minipage}[t]{0.49\textwidth}
\centering
\captionsetup{width=\textwidth}
\includegraphics[width=0.5\linewidth]{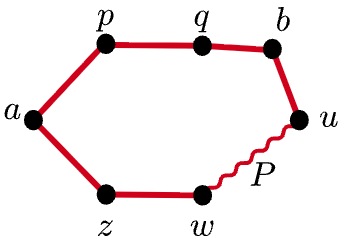}
    \caption{Illustration of how a cycle of length $2k+1$ containing both edges $pq$ and $zw$ is constructed. Straight lines depict edges, curvy one denotes a path.}
    \label{fig:P1}
\end{minipage}\hfill
\begin{minipage}[t]{0.49\textwidth}
\centering
\captionsetup{width=\textwidth}
\includegraphics[width=0.35\linewidth]{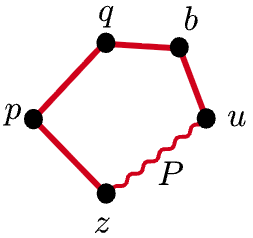}
    \caption{Illustration of how a cycle of length $2k+1$ containing both edges $pq$ and $pz$ is constructed. Straight lines depict edges, curvy one denotes a path.}
\label{fig:P2}
\end{minipage}
\end{figure}

        Therefore, any two edges in $G_2$ lie on a common cycle of length $2k+1$, and hence have different colors. By inequality \eqref{12} the number of edges in $G_2$ (and hence also the number of distinct colors used) is at least 
        \begin{align}\notag
            \frac{1}{2}|Y|\left(\frac{n}{2}-3\varepsilon^3n-10k-\frac{5}{2}\right)
            &\ge  \frac{1}{2}\left(\frac{n}{2}-\varepsilon^3n-\frac{3}{2}-5k\right)\left(\frac{n}{2}-3\varepsilon^3n-10k-\frac{5}{2}\right)\\ \notag
            &\ge \left(\frac{n^2}{8}+\frac{1}{2}\varepsilon^6 n^2\right) +\frac{1}{2}\varepsilon^3 n^2 -\varepsilon n^2\\ 
            &\ge \frac{e}{2}+\frac{n}{2}\sqrt{e-\frac{n^2}{4}}-\varepsilon n^2 > g(n,e),\label{equ:g(n,e)}
        \end{align}
        where in the second inequality we used the assumption that $\varepsilon<0.01$, and $n \ge 8k/\eps$, and in the penultimate inequality we used $e< \left(\frac{1}{4}+\varepsilon^6\right)n^2$, twice. 
        This completes the proof of Claim~\ref{claim1}.
    \end{proof}

    Since $e>n^2/4$, by Lemma \ref{book}, there is an edge $pq$ in $G$ such that $p$ and $q$ have more than $n/6$ common neighbors. Set $A:=N(p)\backslash \{q\}$. 
    In this case, we say an edge $xy$ is {\bf good} if at least one endpoint of $xy$ lies in $A$, and neither $x$ nor $y$ is in $\{p, q\}$. We next show that any two good edges belong to a common cycle of length $2k+1$. 
    As in the proof of Claim~\ref{claim1}, we split the remainder of the proof into two cases, depending on whether the two good edges are adjacent. In both cases, the assumption $k\ge 4$ is crucial.

    Let $xy$ and $xz$ be two distinct good edges (in fact, the only property of “goodness” we need here is that $\{x,y,z\}\cap \{p,q\}=\emptyset$), and let $r$ be a common neighbor of $p$ and $q$ such that $r\notin\{ x,y,z \}$. 
    By Claim \ref{claim1}, there is a path $P_1$ of length 2 or 3 connecting $p$ and $z$ such that $x,y,q,r$ are all not in $P_1$. 
    According to Lemma \ref{greedy} and inequality \eqref{del2}, there is a path $P_2$ of length $2k-5-\ell(P_1)$ connecting $y$ and a vertex $u$ such that $P_2$ and the path $rqpP_1zx$ are vertex-disjoint. 
    By Claim \ref{claim1}, there is a path $P_3$ of length 2 or 3 connecting $u$ and $r$ such that $P_3$ and the path $rqpP_1zxyP_2u$ are internally vertex-disjoint. If $\ell(P_3)=2$, then $rqpP_1zxyP_2uP_3r$ is a cycle of length $2k+1$. If $\ell(P_3)=3$, then $rpP_1zxyP_2uP_3r$ is a cycle of length $2k+1$. See Figure~\ref{fig:C1}, for an illustration.

\begin{figure}[h]
    \centering
    \begin{subfigure}[t]{0.3\textwidth}
        \centering
        \includegraphics[width=0.8\linewidth]{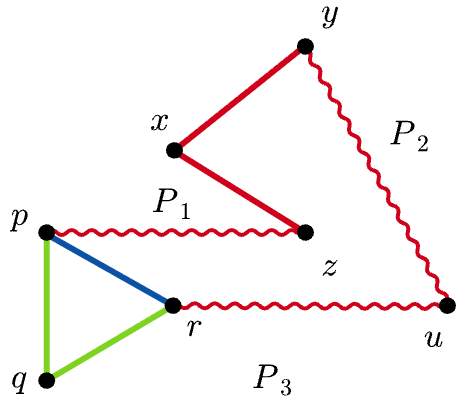}
        \caption{} 
        \label{fig:C1}
    \end{subfigure}
    \hfill
    \begin{subfigure}[t]{0.3\textwidth}
        \centering
        \includegraphics[width=0.9\linewidth]{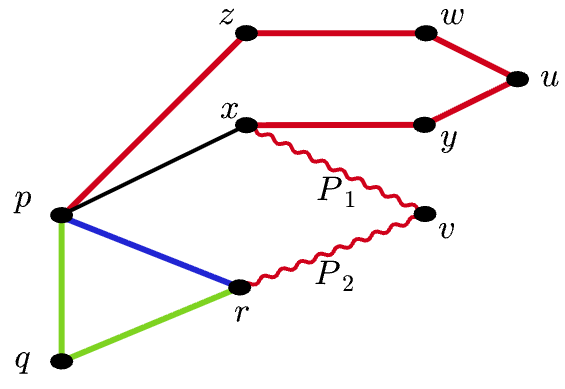}
        \caption{}
        \label{fig:C2.1}
    \end{subfigure}
    \hfill
    \begin{subfigure}[t]{0.3\textwidth}
        \centering
        \includegraphics[width=0.95\linewidth]{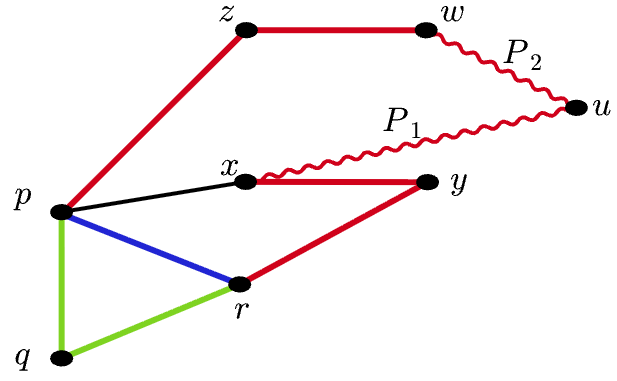}
        \caption{}
        \label{fig:C2.2}
    \end{subfigure}
    \caption{Illustration of how a cycle of length $2k+1$ containing both good edges is constructed. Straight lines depict edges, curvy ones denote paths. We use either the blue edge or the two green edges to adjust the length to precisely $2k+1$.}
    \label{fig:combined}
\end{figure}

    Let $xy$ and $zw$ be two disjoint good edges with $\{x,z\}\subset A$. 
    We divide the proof into two subcases according to whether $y$ and $w$ have a common neighbor in $V\backslash \{ p,q,x,z \}$.
    
    If $y$ and $w$ have a common neighbor $u$ such that $u\notin \{ p,q,x,z \}$, let $r$ be a common neighbor of $p$ and $q$ such that $r\notin \{ x,y,z,w,u \}$. Applying Lemma \ref{greedy} and inequality \eqref{del2}, we obtain a path $P_1$ in $G$ of length $2k-8$ connecting $x$ and a vertex $v$ such that $y,u,w,z,p,q,r$ are all not in $P_1$. 
    Applying Claim~\ref{claim1}, there exists a path $P_2$ of length 2 or 3 connecting $v$ and $r$ such that $P_2$ and the path $vP_1xyuwzpqr$ are internally vertex-disjoint. If $\ell(P_2)=2$, then $xP_1vP_2rqpzwuyx$ is a cycle of length $2k+1$. If $\ell(P_2)=3$, then $xP_1vP_2rpzwuyx$ is a cycle of length $2k+1$. See Figure~\ref{fig:C2.1}, for an illustration.
    
    Now, assume that $y$ and $w$ have no common neighbors in $V\backslash \{ p,q,x,z \}$. By inequality \eqref{del2}, we have $\min\{d(y),d(w)\}\ge \delta\ge \frac{n}{2} - 2\varepsilon^3n-\frac{1}{2}$.
    Since $|N(p)\cap N(q)|\geq \frac{n}{6}$, there is a common neighbor $r$ of $p$ and $q$ such that $r\notin \{ x,y,z,w \}$ and $r$ is adjacent to one of $y$ and $w$. Without loss of generality, assume that $r$ is adjacent to $y$. Similarly, by Lemma \ref{greedy} and inequality \eqref{del2}, there exists a path $P_1$ of length $2k-7$ connecting $x$ and a vertex $u$ such that $y,w,z,p,q,r$ are all not in $P_1$. By Claim \ref{claim1}, there is a path $P_2$ of length 2 or 3 connecting $u$ and $w$ such that $P_2$ and the path $wzpqryxP_1u$ are internally vertex-disjoint. If $\ell(P_2)=2$, then $zpqryxP_1uP_2wz$ is a cycle of length $2k+1$. If $\ell(P_2)=3$, then $zpryxP_1uP_2wz$ is a cycle of length $2k+1$. See Figure~\ref{fig:C2.2}, for an illustration.

    Therefore, any two good edges belong to a common cycle of length $2k+1$, and hence have different colors. However, the number of good edges is at least
    \begin{align*}
        \frac{1}{2}\sum_{z\in A}(d(z)-2)&\ge \frac{1}{2}|A|(\delta -2)\ge \frac{1}{2}(\delta -1)(\delta -2).
    \end{align*}
    By inequality \eqref{del2} the number of good edges (and hence also the number of distinct colors) in $G$ is at least
    \begin{align*}
        \frac{1}{2}\left(\frac{n}{2}-\varepsilon^3n-\frac{3}{2}\right)\left(\frac{n}{2}-\varepsilon^3n-\frac{5}{2}\right) 
        \ge \frac{n^2}{8}+\frac{1}{2}\varepsilon^6 n^2 +\frac{1}{2}\varepsilon^3 n^2 -\varepsilon n^2> g(n,e),
    \end{align*}
    where in the first inequality we used the assumption that $\varepsilon<0.01$, and that $n\geq 2/\eps$, and the last inequality is already shown in \eqref{equ:g(n,e)}.
    This completes the inductive step and hence the proof of Theorem~\ref{main}.
\end{proof}

We note that we crucially used the assumption that $k \ge 4$ in both cases above. Namely, it ensures that the total length of the two short paths connecting our pair of edges (provided by our lemmas, the claim, or the construction) is not already longer than $2k-1$.
For $k=3$, we do have a more involved ``stability'' argument, allowing us to bypass this issue in the second case, leaving the first case as the main bottleneck in resolving the $k=3$ case as well.

\end{document}